\def\PREP{PREP} 
\def\FINAL{FINAL} 
\def\form{\PREP}
\newenvironment{Macaulay2}{ \begin{spacing}{0.4} 
\smallskip } { \smallskip 
\end{spacing} }
 \newcommand{\bC}{{\mathbb C}}
\newcommand{\bN}{{\mathbb N}}
\newcommand{\bff}{{\boldsymbol{f}}}
\newcommand{\bfg}{{\boldsymbol{g}}}
\newcommand{\bfh}{{\boldsymbol{h}}}
\newcommand{\xx}{{\boldsymbol{x}}} \newcommand{\x}{{\mathbf{x}}}
\newcommand{\zero}{{\mathbf{0}}}
 \def\D0{D_\zero}
\newdimen\p@renwd \font\tenex=cmex10
\def\bbordermatrix#1{\begingroup \m@th
\setbox\z@\vbox{\def\\{\crcr\noalign{\kern2\p@\global\let\cr\endline}}%
    \ialign{$##$\hfil\kern2\p@\kern\p@renwd&\thinspace\hfil$##$\hfil
      &&\quad\hfil$##$\hfil\crcr
      \omit\strut\hfil\crcr\noalign{\kern-\baselineskip}%
      #1\crcr\omit\strut\cr}}%
  \setbox\tw@\vbox{\unvcopy\z@\global\setbox\@ne\lastbox}%
  \setbox\tw@\hbox{\unhbox\@ne\unskip\global\setbox\@ne\lastbox}%
  \setbox\tw@\hbox{$\kern\wd\@ne\kern-\p@renwd\left[\kern-\wd\@ne
    \global\setbox\@ne\vbox{\box\@ne\kern2\p@}%
    \vcenter{\kern-\ht\@ne\unvbox\z@\kern-\baselineskip}\,\right]$}%
  \null\;\vbox{\kern\ht\@ne\box\tw@}\endgroup}
\newtheorem*{theoremstar}{Theorem}
\newtheorem{theorem}{Theorem}
\newtheorem{ex}[theorem]{Example} \newtheorem{rem}[theorem]{Remark}
\newcommand{\C}{{\mathbb{C}}}
\def\NAG{{\it NAG4M2}}
\def\M2{{\it Macaulay2}}
\def\Bertini{{\it Bertini}}
\def\TYLi{{\it Hom4PS2}}
\def\PHC{{\it PHCpack}}
\begin{document}

\ifx\form\FINAL \renewcommand\Blue[1]{#1}
\renewcommand\Brown[1]{#1} \fi

\ifx\form\PREP \def\publname{\scriptsize \Red{Draft of \today}
\def\currentvolume{} \def\currentissue{} \pagespan{1}{60} \PII{}}
\copyrightinfo{}{} \fi

\title{
  Numerical Algebraic Geometry\\
  for Macaulay2
}

\author{ Anton Leykin }
\address{School of Mathematics, Georgia Institute of Technology}
\email{leykin@math.gatech.edu}
\urladdr{http://www.math.gatech.edu/\~{}aleykin3/}

\begin{abstract}
Numerical algebraic geometry uses numerical data to describe algebraic varieties. It is based on numerical polynomial homotopy continuation, which is a technique alternative to the classical symbolic approaches of computational algebraic geometry. We present a package, whose primary purpose is to interlink the existing symbolic methods of Macaulay2 and the powerful engine of numerical approximate computations. The core procedures of the package exhibit performance competitive with the other homotopy continuation software.
\end{abstract}

\maketitle

Numerical algebraic geometry \cite{SVW9,Sommese-Wampler-book-05} is a relatively young subarea of
computational algebraic geometry that originated as a blend of the
well-understood apparatus of classical algebraic geometry over the
field of complex numbers and numerical polynomial homotopy
continuation methods. Recently steps have been made to extend the reach of numerical algorithms making it possible not only for complex algebraic varieties, but also for schemes, to be represented numerically.
What we present here is a description of ``stage one'' of a comprehensive project that will make the machinery of numerical
algebraic geometry available to the wide community of users of \M2\ \cite{M2www}, a dominantly symbolic computer algebra system. Our open-source package dubbed \NAG\ \cite{NAGwww} and {\tt NumericalAlgebraicGeometry}~\cite{M2www} was first released in \M2\ distribution version 1.3.1.

``Stage one'' has been limited to implementation of algorithms that solve the most basic problem, upon solution of which the majority of other problems depend:

\begin{quote} Given polynomials $f_1,\ldots,f_n \in
\bC[x_1,\ldots,x_n]$ such that they generate a 0-dimensional ideal
$I=(f_1,\dots,f_n)$ find {\em numerical approximations} of all
points of the underlying variety $V(I)=\{\x\ |\ \bff(\x)=0 \}$.
\end{quote}

\noindent This task is accomplished by applying the idea of homotopy
continuation. To solve a {\em target} polynomial system $\bff =
(f_1,\ldots,f_n) = 0$ construct a {\em start} polynomial system
$\bfg=(g_1,\ldots,g_n)$ with a ``similar structure'' (the meaning of
this will be explained later), but readily available solutions.
Define a {\em homotopy},
\begin{equation}\label{equ:homotopy} \bfh =
(1-t)\bfg + \gamma t\bff \in \bC[\xx, t],\  \gamma \in
\bC^*,\end{equation}
which specialized to the values of $t$ in
the real line interval $[0,1]$ provides a collection of {\em
continuation paths} leading from the (known) solutions of the start
system, $\bfg = \left.\bfh\right|_{t=0}$, to the (unknown) solutions
of the target system, $\bff = \left.\bfh\right|_{t=1}$.

One option for a start system with the ``similar structure'' and
readily available (regular) solutions, $\bfg = (x_1^{\deg
f_1}-1,\ldots,x_n^{\deg f_n}-1)$, leads to the so-called {\em
total degree homotopy}, for which the following statement enables
numerical computation.

\begin{theoremstar} For all but finitely many values of $\gamma$ in the
homotopy~(\ref{equ:homotopy}) the homotopy
continuation paths have no singularities with a possible exception
of the endpoints corresponding to $t=1$.

Every solution of the target system, provided there are finitely many, is an endpoint (at $t=1$) of some continuation path.
\end{theoremstar}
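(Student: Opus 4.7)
The plan is to prove the two assertions separately: the first by a parameter-transversality argument (the classical ``$\gamma$-trick''), and the second by a degree count.

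For the first assertion, I consider the total incidence variety
\[
\mathcal{V} \;=\; \{(\x,t,\gamma)\in\bC^n\times\bC\times\bC^* : \bfh(\x,t,\gamma)=0\}
\]
and its branch locus $\Sigma = \mathcal{V}\cap\{\det D_\x\bfh=0\}$, which records precisely those points at which a continuation path is singular. The essential calculation is $\partial_\gamma \bfh = t\,\bff(\x)$: varying $\gamma$ perturbs $\bfh$ in a direction independent of $\x$-motion wherever $t\ne 0$ and $\bff(\x)\ne 0$. A Bertini/Sard-type argument applied to the projection $\mathcal{V}\to\bC^*$, anchored by the observation that the start system $\bfg=(x_1^{d_1}-1,\ldots,x_n^{d_n}-1)$ has only regular zeros (so $\Sigma\cap\{t=0\}=\emptyset$), then shows that the image of $\Sigma$ under $(\x,t,\gamma)\mapsto\gamma$ is contained in a proper algebraic subset of $\bC^*$. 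Since $\bC^*$ is a one-dimensional complex variety, such a subset is finite.

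For the second assertion, I count paths. The start system has exactly $D=d_1\cdots d_n$ regular zeros in $\bC^n$, each initiating a continuation path. For $\gamma$ outside the finite exceptional set from part one, the implicit function theorem extends each starting point to a smooth map $\x(t)$ on $[0,1)$, and passage to a projective compactification then guarantees a limit at $t=1$, necessarily a zero of $\bfh|_{t=1}=\gamma\bff$, i.e., of the target $\bff$. B\'ezout's theorem caps the number of projective zeros of the zero-dimensional system $\bff$ at $D$ (counted with multiplicity), so every isolated affine solution of $\bff$ must be realized as the limit of at least one path.

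The delicate point, and the main obstacle I anticipate, is the transition from ``$\Sigma$ is a proper closed subset of $\mathcal{V}$'' to ``bad $\gamma$ form a finite subset of $\bC^*$''; concretely, one must rule out any irreducible component of $\Sigma$ that dominates $\bC^*$ under the $\gamma$-projection. The emptiness of $\Sigma\cap\{t=0\}$ together with the identity $\partial_\gamma\bfh=t\,\bff(\x)$ is what forces the image to be proper, but the argument is most delicate when solutions escape to infinity as $t\to 1$; this is where the projective compactification used in part two must be set up carefully so that it meshes with the branch-locus analysis in part one.
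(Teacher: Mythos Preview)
Your overall strategy is more explicit than the paper's, which simply places the homotopy inside the parameter space $P$ of polynomial systems, notes that the discriminant $A\subset P$ is Zariski closed, and invokes Lemma~7.1.3 of Sommese--Wampler as a black box. Your treatment of the second assertion via B\'ezout and projective compactification is reasonable and supplies detail the paper leaves to the reference.

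The first assertion, however, has a genuine gap. You claim that the image of $\Sigma$ under $(\x,t,\gamma)\mapsto\gamma$ is contained in a proper algebraic subset of $\bC^*$, and then deduce finiteness from $\dim_\bC\bC^*=1$. This is false when $t$ is allowed to range over~$\bC$: for \emph{generic} $\gamma$ there is some complex $t$ giving a singular fibre, so $\Sigma\to\bC^*_\gamma$ is typically dominant. A one-line example: with $n=1$, $\bfg=x^2-1$, $\bff=(x-2)^2$, one computes that $\Sigma\cap\{t\neq 1\}$ is exactly $\{(\,x=\tfrac12,\ t=\tfrac{1}{3\gamma+1},\ \gamma\,):\gamma\neq -\tfrac13\}$, which surjects onto $\bC^*\setminus\{-\tfrac13\}$. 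Neither the anchor $\Sigma\cap\{t=0\}=\emptyset$ nor the identity $\partial_\gamma\bfh=t\,\bff(\x)$ rules this out (here $t\neq 0$ and $\bff(\tfrac12)\neq 0$).

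What you are missing is that the constraint $t\in[0,1)$ is a \emph{real} condition, so the bad locus in $\gamma$ is only real semi-algebraic, not complex algebraic. The argument behind the lemma the paper cites runs as follows: for $t\in[0,1)$ write $\bfh=(1-t)\bigl(\bfg+s\,\bff\bigr)$ with $s=\gamma t/(1-t)$. The set $\{\bfg+s\,\bff:s\in\bC\}$ is a single complex line in $P$ passing through the regular system $\bfg$, hence meets $A$ in a finite set $B\subset\bC$. For fixed $\gamma$, as $t$ traverses $[0,1)$ the parameter $s$ traces the real half-line $\{r\gamma:r\ge 0\}$, and this ray meets $B$ only when $\arg\gamma$ lies in the finite set $\{\arg s_0:s_0\in B\}$. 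That is the finiteness you need; your Bertini/Sard sketch should be replaced by this reparametrisation.
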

\begin{proof} Let $P$ be the linear space of systems of $n$ polynomials in $n$ variables with complex coefficients. Note that $P\cong\C^m$ for some $m\in\bN$ and systems with at least one singular solution form a Zariski closed set $A\subset P$. With the homotopy (\ref{equ:homotopy}) we satisfy the conditions of Lemma~7.1.3 of~\cite{Sommese-Wampler-book-05} with an exception of the direction of the homotopy (in \cite{Sommese-Wampler-book-05} the parameter $t$ varies from 1 to 0). The Lemma concludes that for all but finitely many choices of $\gamma$ the homotopy (\ref{equ:homotopy}) for $t\in[0,1)$ misses the set $A$.
\end{proof}
The homotopy continuation argument above belongs to the core of classical algebraic geometry; it has been known in the beginning of the 20th century.

Differentiating the homotopy equation $\bfh=\zero$ gives the
following system of ODEs \begin{equation}
\label{equ:ODE}\frac{d\xx}{dt} = \bfh_\xx^{-1}\bfh_t,\end{equation}
where $\bfh_x$ is the Jacobian of the homotopy (with respect to
$\xx$) and $\bfh_t$ is the derivative with respect to the parameter
$t$. The solutions of (\ref{equ:ODE}) for $t\in [0,1]$ with initial
conditions given by the solutions of the start system are the
continuation paths we need. Finding continuation paths approximately, therefore, reduces
to numerical solving of systems of ODEs. There is an additional advantage: at any point $t=t_0$ we can refine an approximate
solution to the polynomial system $\left.\bfh\right|_{t=t_0}=\zero$ with Newton's method. Provided the numerical tracking procedure has not deviated from the given path, this brings the approximation as close to the path as desired.

The basic {\em tracker} operates by alternating {\em predictor} (a numerical integration step)
and {\em corrector} (several applications of Newton's operator) as shown in Figure
\ref{fig:predictor-corrector}. The ultimate goal of the tracker is to approximate the end of a continuation path given an approximation of its beginning.

\begin{figure}
\begin{picture}(400,130)
\put(0,0){
\includegraphics[width=\textwidth]{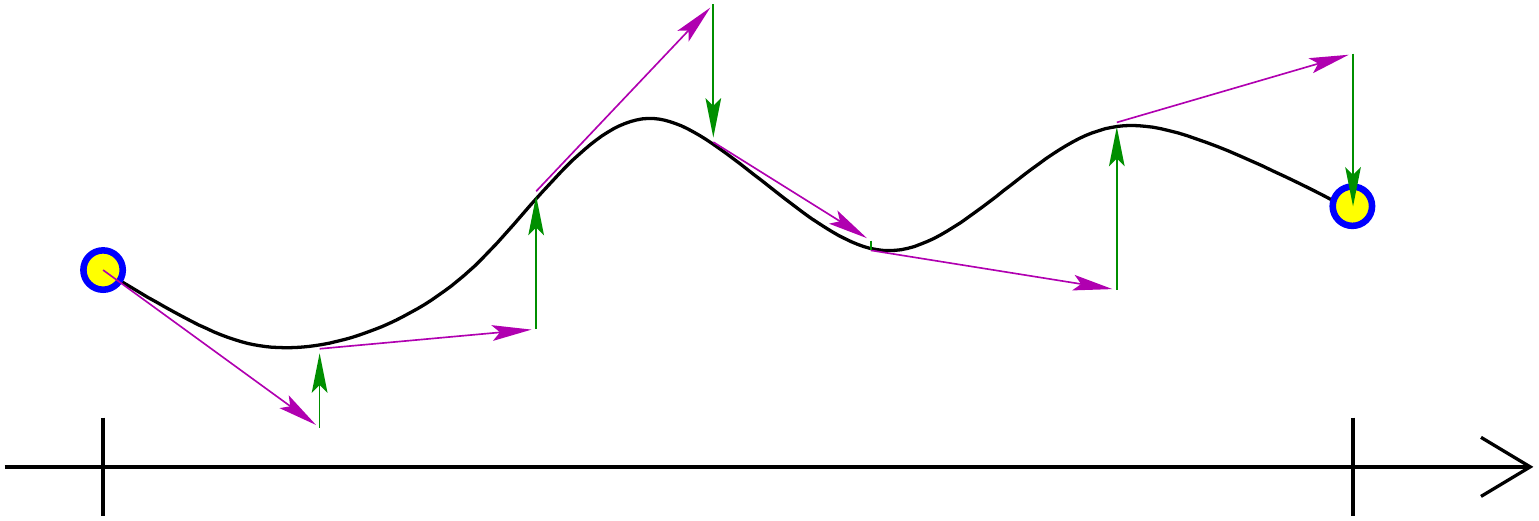}
}
\put(32,0){$0$}
\put(324,0){$1$}
\put(360,0){$t$}
\end{picture}
\caption{Tracking procedure: predictor steps (using the tangent
predictor) are followed by corrector steps.}
\label{fig:predictor-corrector}
\end{figure}

An introductory example of tracking a total-degree homotopy is below:
{\small
\begin{Macaulay2}
\begin{verbatim}
i1 : needsPackage "NumericalAlgebraicGeometry"

i2 : R = CC[x,y];

i3 : S = {x^2-1,y^2-1}; T = {x^2+(y-5)^2-16, x*y};

i5 : solsS = {(1,-1),(1,1),(-1,1),(-1,-1)};

i6 : track(S,T,solsS)

o6 = {[M,t=.04762], {2.955e-17, 1}, {9.912e-14, 9}, [M,t=.04762]}

i7 : track(S,T,solsS,gamma=>0.6+0.8*ii)

o7 = {{1.374e-13-4.782e-14*ii, 9}, {3*ii, -5.727e-17+9.943e-17*ii},
     --------------------------------------------------------------
     {-2.026e-17-2.601e-17*ii, 1}, {-3*ii, 4.792e-18-3.185e-19*ii}}

\end{verbatim}
\end{Macaulay2}
}
Note that the functions in the example use $\gamma=1$, the default value of the parameter $\gamma$ in (\ref{equ:homotopy}), which often results in a homotopy that goes through a singular point when both start and target systems have real coefficients. In the example above the tracking of two paths fails as $\left.\bfh\right|_{t\approx 0.04762} = 0$ has singular solutions. Picking a random complex value for $\gamma$ (in practice, a number is chosen on the unit circle with the uniform probability distribution) results in a regular homotopy with probability $1$ according to the Theorem. In fact, the black-box solver {\tt solveSystem} does exactly that when it calls {\tt track}.

\smallskip\noindent{\bf Software implementation strategy.}
Despite the seeming simplicity of the approach described above, there are many technical details that an implementer has to think through: choosing a predictor mechanism, dynamic step adjustment, etc. The development of \NAG\ has started with developing the basic function called {\tt track} that given a target system, a start system, and a list of start solutions numerically tracks the corresponding homotopy paths as the continuation parameter $t$ is varied from 0 to 1. This has been carried out in the top level language of \M2; more than a dozen optional parameters supplied to {\tt track} allow users to experiment with various settings of the tracker and set numerical thresholds that control the precision of the computation.

The language of \M2\ is, however, an interpreted language; the performance of the code was far from optimal. That is why it was crucial to implement the computationally intensive parts of the code in C++ in the \M2\ engine. At the present moment pieces of the source code are written in three languages:
\begin{itemize}
  \item \M2\ language: processing the input and setting up evaluation, predictor, and corrector routines; launching the routines in the engine; managing the output;
  \item C++ language: fast execution of the predictor-corrector steps using standard double floating point arithmetic;
  \item D language: providing interface, converting objects of \M2\ top level classes into objects of types and classes used by the C++ engine.
\end{itemize}
The C++ part relies on {\sc lapack} \cite{Lapack} for numerical linear algebra. This is the only external library that has been used so far.

\smallskip\noindent{\bf Performance.} In addition to native implementation the user is given an option to outsource the computation to one of the three polynomial homotopy continuation softwares: \Bertini~\cite{Bertini}, \TYLi~\cite{HOM4PSwww}, or \PHC~\cite{V99}. This depends on the availability of the software for the user's platform.

One of the goals of ``stage one'' of the project was to achieve performance competitive with the existing software. The following timings (obtained on a single core of a 64-bit Linux system) demonstrate the results for several test problems with a moderate number of solutions (less than 10,000):
\begin{itemize}
  \item $\mbox{Random}_n^d$: a system of dense polynomials of degree $d$ in $n$ variables with random coefficients;
  \item $\mbox{Katsura}_{n}$: a classical benchmark with one linear and $n-1$ quadratic equations in $n$ variables;
  \item $\mbox{GEVP}_{n}$: the system corresponding to a generalized eigenvalue problem, $Av=\lambda Bv$ for $n\times n$ (randomly generated) matrices $A$ and $B$.
\end{itemize}

All these problems have regular solutions and do not encounter near-singularities, i.e., the continuation paths are sufficiently far from each other, so that double-precision arithmetic is enough to carry out the computations. All runs for all software are made in standard double precision with default settings.\footnote{Disclaimer: The timing ranks may depend on our selection of benchmarks. In fact, there are many problems which can not be solved with one program, but are solved by another quickly. In addition, timings depend on a variety of factors including  the hardware, the operating system, and the parameters of continuation.  }

\smallskip

\noindent
\begin{tabular}{|c||c|c|c|c|c|c|}
\hline
 & $\mbox{Random}_5^4$ & $\mbox{Random}_5^5$ & $\mbox{Katsura}_{11}$ & $\mbox{Katsura}_{12}$ & $\mbox{GEVP}_{35}$ \\
\hline\hline
\# solutions&1024 & 3125& 1024 & 2048 & 35 \\
\hline\hline
\NAG& 4 (sec)    & 30   & 4    & 11   & 3\\
\hline
\TYLi    & 11    & 78   & 7    & 19   & \ \ ? \footnotemark \\
\hline
\Bertini & 51    & 402  & 15   & 37   & 40\\
\hline
\PHC     & 63    & 550  & 37   & 102  & 323\\
\hline
\end{tabular}

\footnotetext{\TYLi\ has no option to handle a user-defined homotopy.}
\smallskip

For all systems except the last one, the number of solutions equals the total degree, while for $\mbox{GEVP}_{n}$ we supply an optimal homotopy with a start system having exactly $n$ solutions. One may rerun the examples used to obtain the timings using the files {\tt showcase.m2} and {\tt benchmarks.m2} in the \M2\ repository\footnote{{\tiny {\tt svn://macaulay2.math.uiuc.edu/Macaulay2/trunk/M2/Macaulay2/packages/NumericalAlgebraicGeometry}}}.
The (current) default options of the function {\tt track} were used to complete the tests. In particular, {\tt Predictor => RungeKutta4} specified a predictor of the fourth order, which performed slightly better than lower order numerical integration algorithms on all mentioned examples.

\smallskip\noindent{\bf Technical details.} There are many factors that may affect performance of the homotopy tracking procedure: the choice of predictor, parameters that control dynamical step adjustment, efficiency of linear algebra subroutines, etc.

One common bottleneck in the computation could be the evaluation of polynomials. All test examples mentioned above, except $\mbox{Katsura}_{n}$, are evaluation-intense.
The evaluation of polynomials presented in the dense form, which is the form \M2\ uses, is quite expensive. To speed up the computation we employ an {\em extended Horner scheme} (see e.g. \cite{CarnicerGasca:MutivariateHorner}), a generalization of the Horner scheme for evaluating univariate polynomials. While not necessarily an optimal scheme (in the sense the univariate Horner scheme is) it makes an attractive design choice for two reasons. First, we can encode the evaluation procedure in a {\em straight-line program (SLP)}, a program evaluating which can be achieved without branching or looping. An SLP corresponding to an extended Horner form is often much shorter than that for the dense form. Second, the automatic differentiation of polynomials represented in Horner form is convenient as the size of an SLP representing the evaluation of a polynomial function and its derivatives simultaneously is not much larger than the size of an SLP for the polynomial function itself (see the corresponding theoretical complexity result in~\cite{Baur-Strassen:complexity-of-partial-derivatives}).

We experimented with speeding up the evaluation of SLPs even further by compiling SLPs at runtime. This results, on some evaluation-intense examples, in up to 10 times faster execution.\footnote{This trick seems to be used in \TYLi\ as well. However, the timings reported for the test examples above are obtained by running all programs without runtime compilation.} At the moment, the option of compiled SLPs is not developed for all platforms and needs to be examined further. For example, the current compilation time may exceed the gain in computation time in some cases.

\smallskip\noindent{\bf Certification.} One important feature that distinguishes \NAG\ from other software is that of the {\em certified homotopy tracking} procedure developed in joint work with Beltr\'an \cite{Beltran-Leykin:CHT}. In general, the heuristic algorithms discussed above need careful tuning before {\em path-jumping} (landing on a wrong path) is eliminated. The most commonly used step control procedures are described in \cite{Bates-Hauenstein-Sommese-Wampler:adaptive-precision} along with \Bertini's {\em adaptive multiprecision} approach for overcoming poor conditioning in numerical linear algebra while tracking the near-singular continuation paths.

In contrast, a certified homotopy tracking algorithm makes a safe choice for a size of step in the continuation, guaranteeing that an {\em approximate zero} created in the next step is associated to the same homotopy path. If all computations are carried out in exact arithmetic the certified algorithm gives a rigorous {\em proof} of the results obtained.
Our implementation of the certified tracking~\cite{Beltran-Leykin:CHT} is invoked by passing the option {\tt Predictor=>Certified} to the function {\tt track}. At the moment the linear algebra computations are carried out in the standard double precision, thus stopping one step short of rigorous certification. The work accomplishing this last step using exact linear algebra and the robust $\alpha$-theory is underway~\cite{Beltran-Leykin:TRC}.

\smallskip\noindent{\bf Future.} The package in its current state provides the base for the higher-level numerical algebraic geometry routines.
Amongst them are robust computing of approximations to singular solutions using {\em endgames}~\cite[\S 10.3]{Sommese-Wampler-book-05} and {\em deflation}~\cite{LVZ,LVZ-higher}, irreducible decomposition of positive-dimensional varieties~\cite[\S 15]{Sommese-Wampler-book-05},
numerical primary decomposition~\cite{Leykin:NPD},
etc.

Homotopy tracking with arbitrary precision, which both \Bertini\ and \PHC\ implement to various extents, should be available in \M2\ in the future. Arbitrary precision floating point arithmetic is already in place in \M2\ (via the {\sc MPFR} library  \cite{MPFR}); however, a fast linear algebra implementation on the level of the engine is necessary for solution refinement and tracking near-singular paths.

Due to independence of tracking procedures for any collection of homotopy paths most of the algorithms in numerical algebraic geometry scale well if parallelized. Since the amount of data that needs to be communicated between CPUs is small in relation to the computational costs, the tasks can be distributed over heterogeneous clusters with slow interconnect. For the large problems these properties give homotopy continuation an edge over Gr\"obner bases techniques, which are currently the main engine of \M2.

We will explore ways to convert numerical results into symbolic results and vice versa. For instance, the algorithms in the package will be able to sample points on a component of a variety with an arbitrarily high precision, which could be used to construct the defining ideal of the component bypassing the need for (symbolic) primary decomposition.

\smallskip\noindent{\bf Acknowledgments.} This work would not be possible without support of the authors of \M2. It was partially accomplished at the \M2\ workshops at the American Institute of Mathematics and Colorado College as well as during the author's stays at MSRI in 2009 and Institut Mittag-Leffler in 2011. The author would also like to thank the referees and editors for their careful remarks and suggestions. This work is partially supported by NSF grant DMS-0914802.

\bibliographystyle{abbrv}
\def\cprime{$'$}

\end{document}